\newtheorem{theorem}{Theorem}
\newtheorem{lemma}[theorem]{Lemma}
\newtheorem{remark}[theorem]{Remark}
\numberwithin{equation}{section}
\newcommand{\PP}{\mathbb{P}}
\newcommand{\EE}{\mathbb{E}}
\begin{document}
\title{SLE(6) and the geometry of diffusion fronts}
\author{Pierre Nolin}
\date{Courant Institute of Mathematical Sciences}
\maketitle

%%%-----------------------------------------------------------------------------------------------------------------------

\begin{abstract}
We study the diffusion front for a natural two-dimensional model where many particles starting at the origin diffuse independently. It turns out that this model can be described using properties of near-critical percolation, and provides a natural example where critical fractal geometries spontaneously arise.
\end{abstract}

\section{Introduction}

In this paper, we study a simple two-dimensional model on a planar lattice where a large number of particles, all starting at a given site, diffuse independently. As the particles evolve, a concentration gradient appears, and the random interfaces that arise can be described by comparing the model to an inhomogeneous percolation model (where the probability that a site is occupied or vacant depends on its location). We exhibit in particular a regime where the (properly rescaled) interfaces are fractal with dimension $7/4$: this model thus provides a natural setting where a \emph{(stochastically-)fractal geometry} spontaneously appears, as observed by statistical physicists, both on numerical simulations and in real-life situations.

%This fractal dimension $\approx 1.75$ as well as various estimates (width of the boundary, length\ldots) have been observed experimentally and numerically.

To our knowledge, the study of the geometry of such ``diffusion fronts'' has been initiated by the physicists Gouyet, Rosso and Sapoval in \cite{Sa1}. In that 1985 paper, they showed numerical evidence that such interfaces are fractal, and they measured the dimension $D_f = 1.76 \pm 0.02$. To carry on simulations, they used the approximation that the status of the different sites (occupied or not) are independent of each other: they computed the probability $p(z)$ of presence of a particle at site $z$, and introduced the gradient percolation model, an inhomogeneous percolation process with occupation parameter $p(z)$. Gradient percolation provides a model for phenomena like diffusion or chemical etching -- the interface created by welding two pieces of metal for instance -- and it has since then been applied to a wide variety of situations (for more details, see the recent account \cite{Sa2} and the references therein).

Based on the observations that the front remains localized where $p(z) \simeq p_c$ and that the dimension $D_f$ was close to $7/4$, a value already observed for critical (homogeneous) percolation interfaces, they argued that there should be a close similarity between these diffusion fronts and the boundary of a large percolation cluster. For critical percolation, this conjectured dimension $7/4$ was justified later via theoretical physics arguments (see for instance \cite{DS,Ca,AAD}). In 2001, thanks to Smirnov's proof of conformal invariance \cite{Sm} and to SLE computations by Lawler, Schramm and Werner (see e.g. \cite{LSW1,LSW2}), a complete rigorous proof appeared \cite{B} in the case of the triangular lattice. It is actually known that in this case, the interfaces are described by the so-called SLE(6) process, a process which has Hausdorff dimension $7/4$. There is also a discrete counterpart of this result: the length of a discrete interface follows a power law with exponent $7/4$, that comes from the arm exponents derived in \cite{SmW}. For more details, the reader can consult the references \cite{L,W2}.

%To our knowledge, the study of the geometry of such ``diffusion fronts'' has been initiated by the physicists Gouyet, Rosso and Sapoval \cite{Sa1}. Based on numerical evidence for such models (the fact that an observed dimension was $\approx 1.75$) and on the idea that these fronts should share some properties with those of critical (homogeneous) percolation interfaces, they conjectured (and this was the first time that this number was proposed) that the dimension of critical percolation interfaces was $7/4$.

%In the same 1985 paper \cite{Sa1}, the authors also introduced the related \emph{gradient percolation} model and argued that it provides a model for phenomena like diffusion or chemical etching (the interface created by welding two pieces of metal for instance). It has been since then applied to a wide variety of situations (for more details, see the recent account \cite{Sa2} and the references therein).

In the paper \cite{N1}, we have studied the gradient percolation model from a mathematical perspective, building on the works of Lawler, Schramm, Smirnov and Werner. Combining their results with Kesten's scaling relations \cite{Ke4}, one first gets a rather precise description of homogeneous percolation near criticality in two dimensions \cite{SmW,N2}, for instance the power law for the so-called characteristic length
$$\xi(p) = |p-1/2|^{-4/3+o(1)}$$
as $p \to 1/2$ (here and in the sequel we restrict ourselves to the triangular lattice $\mathbb{T}$). Estimates of this type (and the underlying techniques) then enabled us to verify the description in \cite{Sa1} for gradient percolation, and to confirm rigorously that dimension $7/4$ shows up in this context.

%To study this type of model where inhomogeneity plays a central role, the physicists Gouyet, Rosso and Sapoval introduced in 1985 the \emph{Gradient percolation} model \cite{Sa1}. This amounts to consider as an approximation that the status of the different sites (occupied or not) are independent of each other: in other words, we replace the original model by an inhomogeneous independent percolation process with occupation parameters equal to the probability of presence of at least one particle. We get in this way an example of random interface that can be used to model phenomena like diffusion or chemical etching (the interface created by welding two pieces of metal for instance). As a model for inhomogeneous media, Gradient percolation has now become a classical object in statistical physics, and it has been applied in a wide variety of situations (for more details, see the recent account \cite{Sa2} and the references therein). We show here that for the model considered, the comparison with Gradient percolation is indeed valid.

The goal of the present paper is to investigate ``diffusion front'' models mathematically, and to use our results concerning gradient percolation to show that their geometry and roughness can also be rigorously described.

Let us now present our main model in more detail. We start at time $t=0$ with a large number $n$ of particles located at the origin, and we let them perform independent (simple) random walks. At each time $t$, we then look at the sites containing at least one particle. These occupied sites can be grouped into connected components, or ``clusters'', by connecting two occupied sites if they are adjacent on the lattice.

\begin{figure}
\begin{center}

\subfigure[$t=10$.]{\includegraphics[width=.325\textwidth,clip]{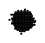}}
\subfigure[$t=100$.]{\includegraphics[width=.325\textwidth,clip]{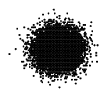}}
\subfigure[$t=500$.]{\includegraphics[width=.325\textwidth,clip]{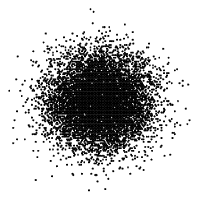}}\\

\subfigure[$t=1000$.]{\includegraphics[width=.325\textwidth,clip]{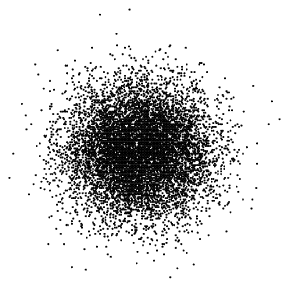}}
\subfigure[$t=1463 \:\: (=\lfloor \lambda_{\max} n \rfloor)$.]{\includegraphics[width=.325\textwidth,clip]{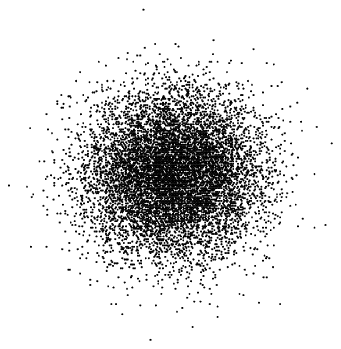}}
\subfigure[$t=2500$.]{\includegraphics[width=.325\textwidth,clip]{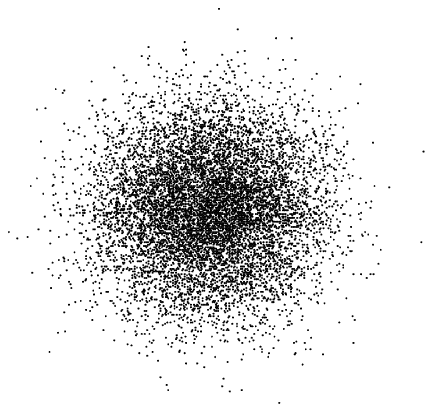}}\\

\subfigure[$t=3977 \:\: (=\lfloor \lambda_c n \rfloor)$.]{\includegraphics[width=.325\textwidth,clip]{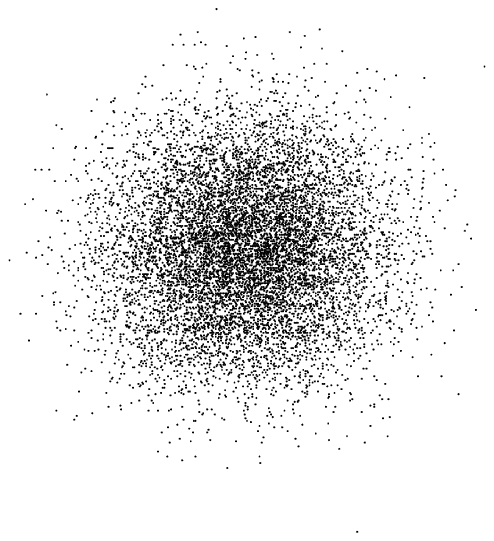}}
\subfigure[$t=5000$.]{\includegraphics[width=.325\textwidth,clip]{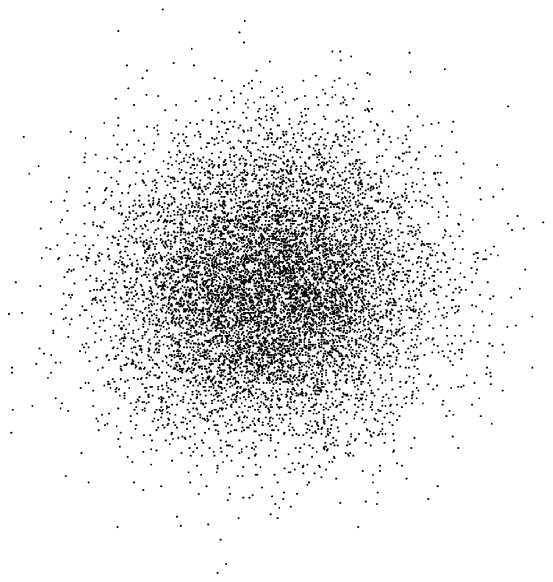}}
\subfigure[$t=10000$.]{\includegraphics[width=.325\textwidth,clip]{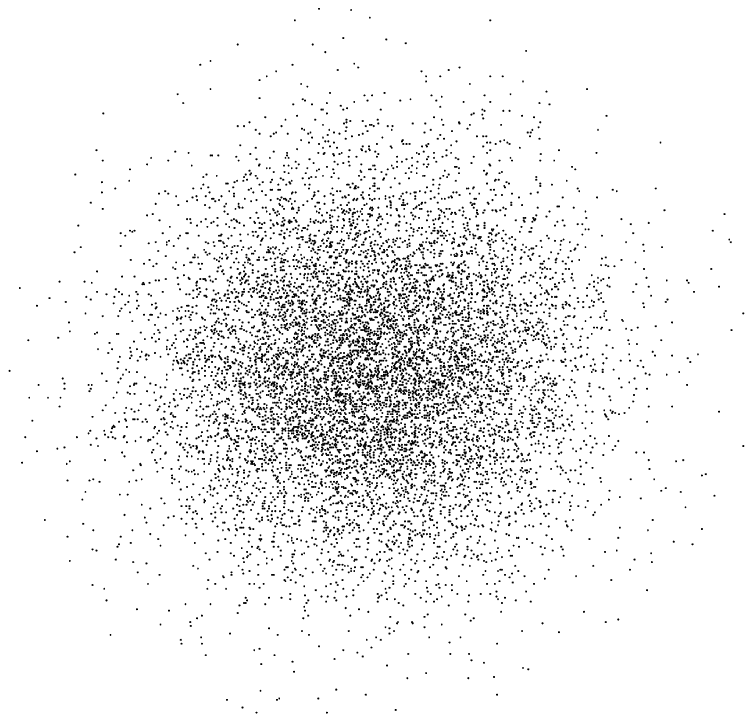}}\\

\caption{\label{evolution} Different stages of evolution as the time $t$ increases, for $n=10000$ particles.}
\end{center}
\end{figure}

As time $t$ increases, different regimes arise, as depicted in Figure \ref{evolution}. Roughly speaking and in the large $n$-limit, at first, a very dense cluster around the origin forms. This clusters grows as long as $t$ remains small compared to $n$. When $t$ gets comparable to $n$, the cluster first continues to grow up to some time $t_{\max} = \lambda_{\max} n$, then it starts to decrease, and it finally dislocates at some critical time $t_c = \lambda_c n$ -- and it never re-appears. The constants $\lambda_{\max} < \lambda_c$ can be explicitly computed, and even if they depend on the underlying lattice, their ratio is (expected to be) universal, equal to $1/e$.

To be more specific, the phase transition corresponding to $\lambda_c$ can be described as follows:
\begin{itemize}
\item When $t = \lambda n$ for $\lambda < \lambda_c$, then (with probability going to one as $n \to \infty$), the origin belongs to a macroscopic cluster. Its boundary is roughly a circle -- we will provide a formula for its radius in terms of $\lambda$ and $n$, it is of order $\sqrt{n}$ -- and its roughness can be described via the dimension $7/4$. This is a \emph{dense phase}.

\item When $t = \lambda n$ for $\lambda > \lambda_c$, then all the clusters -- in particular the cluster of the origin -- are very small, and the whole picture is stochastically dominated by a subcritical percolation picture. This is a \emph{dilute phase}.
\end{itemize}

Actually, alongside these two principal regimes, at least two other interesting cases could be distinguished: the (near-)critical regime when $t$ is very close to $\lambda_c n$, and a short-time transitory regime when $t$ is very small compared to $n$ ($t \ll n^{\alpha}$ for any $\alpha > 0$, typically $t = \log n$). We do not discuss these regimes in the present paper.

We finally study a variant of this model, where now a source creates new particles at the origin in a stationary way -- one can for instance think of the formation of an ink stain. If the new particles arrive at a fixed rate $\mu>0$, then after some time, one observes a macroscopic cluster near the origin, that grows as time passes. We explain briefly how such a model can be described using the same techniques and ideas.

To sum up, our results are thus strong evidence of \emph{universality} for random shapes in the ``real world''. They indeed indicate that as soon as a density gradient arises, random shapes similar to SLE(6) should appear -- at least under some hypotheses of regularity for the density, and of (approximate) spatial independence. However, note that it is explained in \cite{N3,NW} that these random shapes are also somewhat different from SLE(6): they are \emph{locally} asymmetric.

\section{Setup and statement of the results}

\subsection{Description of the model}

As previously mentioned, our results will be obtained by comparing our model to an (inhomogeneous) independent percolation process, for some well-chosen occupation parameters. This process is now rather well-understood, which allows one to get precise estimates on macroscopic quantities related to the interface, such as the size of its fluctuations or its length. In particular, the interface is a fractal object of dimension $7/4$, whose geometry is close to the geometry of SLE(6), the Schramm-Loewner Evolution with parameter $6$.

Recall that site percolation on a lattice $G = (V,E)$, where $V$ is the set of vertices (or ``sites''), and $E$ is the set of edges (or ``bonds'') can be described as follows. We consider a family of parameters $(p(z),z \in V)$, and we declare each site $z$ to be occupied with probability $p(z)$, vacant with probability $1 - p(z)$, independently of the other sites. We are then often interested in the connectivity properties of the set of occupied sites, that we group into connected components, or ``clusters''. In the following, we use the term ``interface'' for a curve on the dual hexagonal lattice bordered by occupied sites on one side, and by vacant sites on the other side (the boundary of a finite cluster for instance).

In the standard homogeneous case where $p(z) \equiv p \in [0,1]$, it is known for a large class of lattices that there is a phase transition at a certain critical parameter $p_c \in (0,1)$ (depending on the lattice), and it is a celebrated result of Kesten \cite{Ke0} that $p_c=1/2$ on the triangular lattice $\mathbb{T} = (\mathbb{V}^T,\mathbb{E}^T)$: when $p \leq 1/2$ there is (a.s.) no infinite cluster, while for $p > 1/2$ there is (a.s.) a unique infinite cluster. At the transition exactly, for the value $p=p_c$ of the parameter, lies the critical regime. This regime turns out to be conformally invariant in the scaling limit, a very strong property established by Smirnov \cite{Sm} that allows one to prove that cluster boundaries converge to SLE(6). One can then use SLE computations \cite{LSW1,LSW2} to derive most critical exponents associated with percolation \cite{SmW}.

From now on, we focus on the lattice $\mathbb{T}$, since at present this is the lattice for which the most precise properties, like the value of the critical exponents, are known rigorously -- a key step being the previously-mentioned conformal invariance property \cite{Sm}, a proof of which is still missing on other lattices. However, let us mention that most results remain valid on other common regular lattices like $\mathbb{Z}^2$, except those explicitly referring to the value of critical exponents, like the fractal dimension of the boundary (see Section 8.1 in \cite{N2} for a discussion of this issue). 

As usual in the statistical physics literature, the notation $f \asymp g$ means that there exist two constants $C_1,C_2 > 0$ such that $C_1 g \leq f \leq C_2 g$, while $f \approx g$ means that ${\log f} / {\log g} \to 1$ -- we also sometimes use the notation ``$\simeq$'', which has no precise mathematical meaning.

Let us now describe the model itself. We assume that at time $t=0$, a fixed number $n$ of particles are deposited at the origin, and we let these particles perform independent random walks on the lattice, with equal transition probabilities $1/6$ on every edge. We then look, at each time $t$, at the ``occupied'' sites containing at least one particle -- note that we do not make any \emph{exclusion} hypothesis, a given site can contain several particles at the same time. We denote by $\pi_t = (\pi_t(z), z \in \mathbb{V}^T)$ the distribution after $t$ steps of a simple random walk starting from $0$, so that the probability of occupation for a site $z$ is
\begin{equation}
\tilde{p}_{n,t}(z) = 1 - (1 - \pi_t(z))^n \simeq 1 - e^{- n \pi_t(z)},
\end{equation}
and we expect
$$\pi_t(z) \simeq \frac{C_T}{t} e^{- \|z\|^2/t}$$
for some constant $C_T$ depending only on the lattice. For our purpose, we will need a rather strong version of this Local Central Limit Theorem type estimate, that we state in the next section. We also denote by $N_{n,t}(z)$ the number of particles at site $z$, at time $t$.

Roughly speaking, $\tilde{p}_{n,t}(z)$ is maximal for $z=0$, and $\tilde{p}_{n,t}(0)$ decreases to $0$ as the particles evolve. Hence, different regimes arise according to $t$: if $p_{n,t}(0) > 1/2$, the picture near the origin is similar to super-critical percolation. There is thus a giant component around the origin, whose boundary is located in a region where the probability of occupation is close to $p_c=1/2$. On the other hand, if $p_{n,t}(0) < 1/2$ the model behaves like sub-critical percolation, so that all the different clusters are very small.

\subsection{Main ingredients} \label{ingredients}

\subsubsection*{Local Central Limit Theorem}

To describe the fine local structure of the boundary, we need precise estimates for $\pi_t$. We use the following form of the Local Central Limit Theorem:
\begin{equation} \label{LCLT}
\pi_t(z) = \frac{\sqrt{3}}{2 \pi t} e^{- \|z\|^2/t} \Bigg[ 1 + O\bigg( \frac{1}{t^{3/4}} \bigg) \Bigg]
\end{equation}
\emph{uniformly} for $z \in \mathbb{V}^T$, $\|z\| \leq t^{9/16}$. This equation means that locally, the estimate given by the Central Limit Theorem holds uniformly: the constant $\sqrt{3}/2$ comes from the ``density'' of sites on the triangular lattice.

This result is sharper than the estimates stated in the classic references \cite{Sp, L_RW}, in particular for atypical values like $C \sqrt{t \log t}$, and such a strong form is needed below. It can be found in Chapter 2 of \cite{LL}, where estimates are derived for random walks with increment distribution having an exponential moment. In that book, results are stated for $\mathbb{Z}^2$, but they can easily be translated into results for the triangular lattice $\mathbb{T}$ by considering a linear transformation of $\mathbb{T}$ onto $\mathbb{Z}^2$. One can take for instance the mapping $x + e^{i\pi/3} y \mapsto x + i y$ (in complex notation), which amounts to embedding $\mathbb{T}$ onto $\mathbb{Z}^2$ by adding NW-SE diagonals on every face of $\mathbb{Z}^2$ -- one then has to consider the random walk with steps uniform on $\{(\pm 1,0),(0,\pm 1),(\pm 1, \mp 1)\}$.

We also use several times the following a-priori upper bound on $\pi_t(z)$, providing exponential decay for sites $z$ at distance $\geq t^{1/2+\epsilon}$: for some universal constant $C>0$, we have
\begin{equation} \label{Hoeffding}
\pi_t(z) \leq C e^{-\|z\|^2/2t}
\end{equation}
for all sites $z$, for any time $t \geq 0$. This bound is for instance a direct consequence of Hoeffding's inequality.

\subsubsection*{Poisson approximation}

Note that if we take a Poissonian number of particles $N \sim \mathcal{P}(n)$, instead of a fixed number, the \emph{thinning} property of the Poisson distribution immediately implies that the random variables $(N_{n,t}(z), z \in \mathbb{V}^T)$ are independent of each other, with $N_{n,t}(z) \sim \mathcal{P}(n \pi_t(z))$. We thus simply get an independent percolation process with parameters
\begin{equation}
p_{n,t}(z) = 1 - e^{-n \pi_t(z)}.
\end{equation}

As will soon become clear, this Poisson approximation turns out to be valid, since we only need to observe a negligible fraction of the particles to fully describe the boundary: what we observe locally looks very similar to the ``Poissonian case''. For that, we use the classic Chen-Stein bound (see Theorem 2.M on p.34 in \cite{BHJ}, or \cite{CDM}). Assume that $W = X_1 + \ldots + X_k$ is a sum of $k$ independent random variables, with $X_i \sim \mathcal{B}(p_i)$. Then for $\lambda = \EE[W] = \sum_{i=1}^k p_i$, we have for the total variation norm
\begin{equation}
\big\| \mathcal{L}( W ) - \mathcal{P}(\lambda) \big\|_{TV} \leq \frac{1 - e^{-\lambda}}{\lambda} \sum_{i=1}^k p_i^2.
\end{equation}

Now, assume that we look at what happens in a (small) subset $A$ of the sites. The distribution in $A$ can be obtained by first drawing how many particles fall in $A$, and then choosing accordingly the configuration in $A$. Each particle has a probability $\pi_t(A) = \sum_{z \in A} \pi_t(z)$ to be in $A$, so that the number of particles in $A$ is $N_t(A) \sim \mathcal{B}(n,\pi_t(A))$, and the Chen-Stein bound implies that ($\lambda = n \pi_t(A)$ in this case)
\begin{equation}
\big\| \mathcal{L}( N_t(A) ) - \mathcal{P}( n \pi_t(A) ) \big\|_{TV} \leq \frac{1}{n \pi_t(A)} n \pi_t(A)^2 = \pi_t(A).
\end{equation}
We can thus couple $N_t(A)$ with $N_A \sim \mathcal{P}( n \pi_t(A) )$ so that they are equal with probability at least $1 - \pi_t(A)$, which tends to $1$ if $A$ is indeed negligible. Hence to summarize, we can ensure that the configuration in $A$ coincides with the ``Poissonian configuration'', i.e. with the result of an independent percolation process, with probability at least $1 - \pi_t(A)$.

\subsubsection*{Radial gradient percolation}

We use earlier results on near-critical percolation, especially Kesten's paper \cite{Ke4}. All the results that we will use here are stated and derived in \cite{N2}, and we follow the notations of that paper. In particular, we use the basis $(1,e^{i\pi/3})$, so that $[a_1,a_2]\times[b_1,b_2]$ refers to the parallelogram with corners $a_i+b_j e^{i\pi/3}$ ($i,j \in \{1,2\}$).

We recall that the characteristic length is defined as
\begin{equation}
L_{\epsilon}(p) = \min\{n \text{\: s.t. \:} \PP_p(\mathcal{C}_H([0,n] \times [0,n])) \leq \epsilon \}
\end{equation}
when $p<1/2$ ($\mathcal{C}_H$ denoting the existence of a horizontal occupied crossing), and the same with vacant crossings when $p>1/2$ (so that $L_{\epsilon}(p) = L_{\epsilon}(1-p)$), for any $\epsilon \in (0,1/2)$, and that we have
\begin{equation}
L_{\epsilon}(p) \asymp L_{\epsilon'}(p) \approx |p-1/2|^{-4/3}
\end{equation}
for any two fixed $0 < \epsilon, \epsilon' < 1/2$. We will also use the property of exponential decay with respect to $L$: for any $\epsilon \in (0,1/2)$ and any $k \geq 1$, there exist some universal constants $C_i = C_i(k,\epsilon)$ such that
\begin{equation} \label{expdecay_cross}
\PP_p(\mathcal{C}_H([0,n] \times [0,k n])) \leq C_1 e^{-C_2 n / L_{\epsilon}(p)}
\end{equation}
uniformly for $p < 1/2$ and $n \geq 1$. \emph{In the following, we fix a value of $\epsilon$, for instance $\epsilon=1/4$, and we simply write $L$, forgetting about the dependence on $\epsilon$.}

The aforementioned approximation allows to use for our model results concerning the independent Bernoulli case: we hence consider a ``radial'' gradient percolation, a percolation process where the parameter decreases with the distance to the origin. Let us first recall the description obtained in a strip $[0,\ell_N] \times [0,N]$ with a constant vertical gradient, i.e. with parameter $p(y) = 1 - y/N$: we will explain later how to adapt these results in our setting. The characteristic phenomenon is the appearance of a unique ``front'', an interface touching at the same time the occupied cluster at the bottom of the strip and the vacant cluster at the top: if $\ell_N \geq N^{4/7+\delta}$ for some $\delta>0$, this happens with high probability (tending to $1$ as $n \to \infty$). Moreover, for any fixed small $\epsilon>0$, also with high probability,
\begin{enumerate}[(i)]
\item this front remains localized around the line $y = y_c = \lfloor N/2 \rfloor$, where $p(y) \simeq 1/2$, and its fluctuations compared to this line are of order
\begin{equation}
\sigma_N = \sup \big\{ \sigma \text{ s.t. } L(p(y_c + \sigma)) \geq \sigma \big\} \approx N^{4/7},
\end{equation}
more precisely they are smaller than $N^{4/7+\epsilon}$ (the front remains in a strip of width $2N^{4/7+\epsilon}$ around $y=y_c$), and larger than $N^{4/7-\epsilon}$ (both above and below $y=y_c$),

\item and since it remains around the line $y=y_c$, its behavior is comparable to that of critical percolation, so that in particular its fine structure can be described via the exponents for critical percolation: for instance, its discrete length $L_N$ satisfies
\begin{equation}
N^{3/7-\epsilon} \ell_N \leq L_N \leq N^{3/7+\epsilon} \ell_N.
\end{equation}

\end{enumerate}

\subsection{Statement of the results}

In view of the previous sections, let us introduce
\begin{equation}
\bar{\pi}_t(r) = \frac{\sqrt{3}}{2 \pi t} e^{- r^2/t}
\end{equation}
(so that $\pi_t(z) = \bar{\pi}_t(\|z\|) [1+ O(1/t^{3/4})]$ uniformly for $\|z\| \leq t^{9/16}$ by Eq.(\ref{LCLT})), and the associated ``Poissonian'' parameters
\begin{equation}
\bar{p}_{n,t}(r) = 1 - e^{-n \bar{\pi}_t(r)}.
\end{equation}

Since $\bar{p}_{n,t}$ is a decreasing continuous function of $r$, and tends to $0$ when $r \to \infty$, there is a unique $r$ such that $\bar{p}_{n,t}(r) = 1/2$ \emph{iff} $\bar{p}_{n,t}(0) \geq 1/2$, or equivalently \emph{iff} $t \leq t_c = \lambda_c n$, with
\begin{equation}
\lambda_c = \frac{\sqrt{3}}{2 \pi \log 2} \quad (\simeq 0.397\ldots).
\end{equation}
We introduce in this case
\begin{equation}
r_{n,t}^* = (\bar{p}_{n,t})^{-1}(1/2),
\end{equation}
where $(\bar{p}_{n,t})^{-1}$ denotes the inverse function of $\bar{p}_{n,t}$. In particular, $r_{n,t}^* = 0$ when $t = \lambda_c n$. One can easily check that 
\begin{equation} \label{rstar}
r_{n,t}^* = \sqrt{t \log \frac{\lambda_c}{t/n}},
\end{equation}
and a small calculation shows that $r_{n,t}^*$, as a function of $t$, increases on $(0,\lambda_{\max} n]$ and then decreases on $[\lambda_{\max} n, \lambda_c n]$ (finally being equal to $0$), with
\begin{equation}
\lambda_{\max} = \frac{\lambda_c}{e} = \frac{\sqrt{3}}{2 e \pi \log 2} \quad (\simeq 0.146\ldots).
\end{equation}
In particular, the ratio $t_{\max} / t_c$ is equal to $1/e$, and is thus independent of $n$ -- and though the lattice appears in $\lambda_c$ and $\lambda_{\max}$, as the ``density'' of sites, $\sqrt{3}/2$ here, it disappears when we take the ratio $t_{\max} / t_c$. For future reference, note also that
\begin{itemize}
\item when $t = \lambda n$, for some fixed $\lambda < \lambda_c$, then $r_{n,t}^*$ increases as $\sqrt{t} \asymp \sqrt{n}$:
$$r_{n,t}^* = \sqrt{\log \frac{\lambda_c}{\lambda}} \sqrt{t},$$

\item when $t = n^{\alpha}$, for some $0 < \alpha < 1$,
$$r_{n,t}^* = \sqrt{t} \sqrt{\bigg( \frac{1}{\alpha} - 1 \bigg) \log t + \log \lambda_c} = \sqrt{\frac{1}{\alpha} - 1} \sqrt{t \log t} + O \bigg( \frac{\sqrt{t}}{\sqrt{\log t}}\bigg).$$
\end{itemize}

We also introduce
\begin{equation}
\sigma_{n,t}^{\pm} = \sup \big\{ \sigma \text{ s.t. } L(p(\lfloor r_{n,t}^* \rfloor \pm \sigma)) \geq \sigma \big\}
\end{equation}
that will measure the fluctuations of the interface. We will see that $\sigma_{n,t}^{\pm} \approx t^{2/7}$ under suitable hypotheses on $n$ and $t$. We are now in a position to state our main result. We denote by $S_r$ (resp. $S_{r,r'}$) the circle of radius $r$ (resp. the annulus of radii $r < r'$) centered at the origin.

\begin{theorem} \label{main_theorem}
Consider some sequence $t_n \to \infty$. Then
\begin{enumerate}[(i)]
\item If $t_n \geq \lambda n$ for some $\lambda > \lambda_c$, then there exists a constant $c = c(\lambda)$ such that
$$\mathbb{P}_{n,t}(\text{every cluster is of diameter $\leq c \log n$}) \to 1$$
as $n \to \infty$. \label{dilute}

\item If $e^{(\log n)^{\alpha}} \leq t_n \leq \lambda' n$ for some $\alpha \in (0,1)$ and $\lambda' < \lambda_c$, then for any fixed small $\epsilon > 0$, the following properties hold with probability tending to $1$ as $n \to \infty$: \label{dense}
\begin{enumerate}
\item there is a unique interface $\gamma_n$ surrounding $\partial S_{t_n^{\epsilon}}$,

\item this interface remains localized in the annulus of width $2t_n^{2/7+\epsilon}$ around $\partial S_{r_{n,t_n}^*}$ (recall that $r_{n,t_n}^* \approx \sqrt{t_n}$), and compared to this circle, its fluctuations, both inward and outward, are larger than $t_n^{2/7-\epsilon}$,

\item its length -- number of edges -- $L_n$ satisfies $t_n^{5/7-\epsilon} \leq L_n \leq t_n^{5/7+\epsilon}$ (in other words, it behaves as $L_n \approx t_n^{5/7}$).
\end{enumerate}

\end{enumerate}
\end{theorem}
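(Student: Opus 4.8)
The plan is to reduce both regimes to the independent radial gradient percolation of Section \ref{ingredients} via the Poisson (thinning) approximation, and then to import the strip estimates (i)--(ii) by a local comparison. For part (\ref{dilute}) (dilute phase): since $t_n \geq \lambda n$ with $\lambda > \lambda_c$, we have $n\bar{\pi}_t(0) = \sqrt{3}\,n/(2\pi t) \leq \sqrt{3}/(2\pi\lambda) < \log 2$, so $\bar{p}_{n,t}(0) < 1/2$ with a fixed gap; combining (\ref{LCLT}) for $\|z\| \leq t^{9/16}$ with (\ref{Hoeffding}) beyond gives $p_{n,t}(z) \leq p_0$ for all $z$ and some fixed $p_0 < 1/2$. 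I would then couple the true configuration, from above, with the Poissonian one obtained from $N^+ \sim \mathcal{P}(n^+)$ particles with $n^+ = n + n^{2/3}$: on the event $\{N^+ \geq n\}$ (of probability $\to 1$) the surplus particles only create occupied sites, so the occupancy field is dominated by an independent percolation with parameters $\leq p_0' < 1/2$. In this genuinely subcritical model the cluster radius has an exponentially decaying tail (a consequence of (\ref{expdecay_cross}) at the fixed value $p_0'$); since by (\ref{Hoeffding}) all particles lie whp in a disk carrying $O(n\log n)$ sites, a union bound over these sites yields a largest diameter $\leq c\log n$ once $c = c(\lambda)$ is large enough.

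For part (\ref{dense}) (dense phase), I would first localise everything in the annulus $\mathcal{A} = S_{r^\ast - t_n^{2/7+\epsilon},\, r^\ast + t_n^{2/7+\epsilon}}$, where $r^\ast := r_{n,t_n}^\ast$. A single application of the Chen--Stein bound couples the configuration in $\mathcal{A}$ with the independent process of parameters $p_{n,t}(z)$, the coupling failing with probability $\pi_t(\mathcal{A}) = \sum_{z\in\mathcal{A}} \pi_t(z)$. Since $\pi_t(z) \asymp (\log 2)/n$ throughout $\mathcal{A}$ and $|\mathcal{A}| = O(r^\ast t_n^{2/7+\epsilon})$, one checks $\pi_t(\mathcal{A}) \to 0$ (the lower bound $t_n \geq e^{(\log n)^\alpha}$ also guarantees $r^\ast \leq t_n^{9/16}$, so that (\ref{LCLT}) applies on $\mathcal{A}$). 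By (\ref{LCLT}) I may then replace $p_{n,t}(z)$ by its radial version $\bar{p}_{n,t}(\|z\|)$, the discrepancy $O(t_n^{-3/4})$ being negligible. Differentiating $\bar{p}_{n,t}$ and using $n\bar{\pi}_t(r^\ast) = \log 2$ gives $|\bar{p}_{n,t}'(r^\ast)| = r^\ast(\log 2)/t_n$, so the relevant strip width is $N := t_n/(r^\ast\log 2) \approx \sqrt{t_n}$ (by (\ref{rstar}), in the logarithmic sense). The strip formulas then predict fluctuations $N^{4/7} \approx t_n^{2/7}$ and, with the circumference $\asymp r^\ast \approx \sqrt{t_n}$ playing the role of the strip length $\ell_N$, a front length $N^{3/7}r^\ast \approx t_n^{5/7}$, exactly the claimed exponents.

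To make this rigorous I would cover $\mathcal{A}$ by $O(r^\ast/N)$ angular sectors of arc-length $\asymp N$, over each of which the gradient and the curvature of the circles $\partial S_r$ are essentially constant, and apply the strip statements sector by sector. The upper bound $t_n^{2/7+\epsilon}$ on the fluctuations follows from (\ref{expdecay_cross}): at distance $t_n^{2/7+\epsilon}$ from $r^\ast$ the characteristic length $L(\bar{p}_{n,t}(r^\ast \pm t_n^{2/7+\epsilon}))$ is much smaller than $t_n^{2/7+\epsilon}$, so occupied (resp.\ vacant) radial crossings of the corresponding sub-annulus are exponentially unlikely; the lower bound $t_n^{2/7-\epsilon}$ and the length bounds $t_n^{5/7\mp\epsilon}$ are the radial analogues of (i)--(ii), built from the critical crossing and arm exponents. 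For the surrounding and uniqueness claim, the disk $S_{r^\ast - t_n^{2/7+\epsilon}}$ is uniformly supercritical ($\bar{p}_{n,t}(r^\ast/2)$ is bounded above $1/2$), so a Poisson coupling in a thin annulus of radius $\asymp\sqrt{t_n}$ combined with an RSW circuit argument produces an occupied circuit of the origin's cluster surrounding $S_{t_n^\epsilon}$ (here $t_n^\epsilon \ll r^\ast$ is needed), while outside $r^\ast + t_n^{2/7+\epsilon}$ all clusters are small; the front $\gamma_n$ is then the unique interface separating them, the localisation ruling out any competing macroscopic interface.

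The heart of the argument, and the main obstacle, is the sector-by-sector comparison: I must show that a genuinely radial percolation with slowly varying gradient is well approximated by finitely many constant-gradient strips, controlling simultaneously the variation of the gradient across a sector, the curvature of $\partial S_r$, and the LCLT error $O(t_n^{-3/4})$, and checking that all three are negligible next to the change $\asymp t_n^{-3/14}$ of $\bar{p}_{n,t}$ across the fluctuation scale $t_n^{2/7}$. Moreover the $o(1)$ error terms coming from the strip results must be made uniform enough to survive the union bound over the $\asymp\sqrt{t_n}$ sectors. I expect the most delicate points to be the lower bound on the fluctuations and the length estimate, since both rest on critical arm exponents that must be shown to persist in the perturbed, locally asymmetric radial geometry.
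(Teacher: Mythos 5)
Your high-level strategy (Chen--Stein/Poisson comparison to an inhomogeneous independent percolation, then near-critical percolation estimates at the scale $L(1/2\pm C t^{-3/14})\approx t^{2/7}$) is the same as the paper's, and your exponent arithmetic is right; but there are two genuine gaps. In the dilute phase (\ref{dilute}), your union bound is over the sites of the disk that w.h.p.\ contains all particles. By Eq.~(\ref{Hoeffding}) that disk has radius of order $\sqrt{t\log (nt)}$, hence of order $t\log(nt)$ sites, \emph{not} $O(n\log n)$; since the theorem allows $t_n\geq \lambda n$ to be arbitrarily large (say $t_n=e^n$), the bound $(\text{number of sites})\times n^{-Cc}$ does not tend to $0$ for any fixed $c=c(\lambda)$. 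The paper handles exactly this by a case split: for $t\leq n^3$ it covers $S_t$ by $\sim t^2\leq n^6$ parallelograms of size $\asymp \log n$ and uses Eq.~(\ref{expdecay_cross}); for $t\geq n^3$ it argues directly that $p_{2n,t}(z)=O(n^{-2})$, so the expected number of adjacent occupied pairs is $O(1/n)$ and w.h.p.\ no cluster has two sites. (Your idea can be repaired by weighting the union bound by occupation probabilities, using $\sum_z p_{n,t}(z)\leq Cn$, but as written the step fails.)

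The more serious gap is in the dense phase, claim (a). You apply the Chen--Stein coupling only in the annulus $\mathcal{A}$ of width $\asymp t^{2/7+\epsilon}$ plus one thin circuit annulus. With a fixed number $n$ of particles the occupancies are \emph{not} independent outside the coupled set, so you cannot: (1) connect your occupied circuit to the origin's cluster; (2) rule out competing interfaces surrounding $\partial S_{t^\epsilon}$ in the huge uncoupled region between $S_{2t^\epsilon}$ and $S_{r^*-t^{2/7+\epsilon}}$ (e.g.\ the boundary of a vacant circuit around the origin there -- recall the parameter is bounded away from $1$ when $t\asymp n$, so holes exist everywhere); and (3) justify ``outside $r^*+t^{2/7+\epsilon}$ all clusters are small'', since a coupling of the full exterior is impossible: $\pi_t(S_{r^+,t})=\Theta(1)$, so the Chen--Stein error there is not small. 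The paper's construction is precisely the enhancement you omit: the coupled set $A$ contains, besides the main annulus, the small annulus $S_{t^\epsilon,2t^\epsilon}$, thin strips $s_1^-,s_2^-,s_3^-$ connecting it to the main annulus, and outer strips $s_1^+,s_2^+,s_3^+$ reaching distance $t$ (beyond which no particle can be after $t$ steps). The occupied circuits/crossings of the inner pieces and the vacant circuit/crossings of the outer pieces form connected occupied and vacant structures that no interface surrounding $S_{t^\epsilon}$ can cross; this \emph{deterministically} squeezes every such interface into $S_{r^*-2t^{2/7+\epsilon},r^*+2t^{2/7+\epsilon}}$, where the two-arm-edge argument of \cite{N1} gives uniqueness. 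Your sentence ``the localisation ruling out any competing macroscopic interface'' is exactly the step that needs this missing construction. Finally, your sector-by-sector transfer uses sectors of arc length $\asymp N\asymp\sqrt{t}$; over such an arc the circle deviates from a straight strip by $\asymp N^2/r^*\asymp\sqrt{t}$, so the constant-gradient strip theorems cannot be invoked as black boxes at that scale. The paper does not transfer the strip statements at all: it reruns the gradient-percolation arguments radially at the fluctuation scale, building circuits from $\approx t^{3/14}$ overlapping parallelograms of size $2t^{2/7}\times t^{2/7}$ via Eq.~(\ref{expdecay_cross}), getting the lower bound on fluctuations from blocking crossings in rhombi of size $2t^{2/7-\epsilon}$, and the length from the two-arm characterization with exponent $1/4$ plus an $L^2$ concentration argument.
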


\section{Proofs}

From now on, we choose not to mention the dependence of $t$ on $n$ in most instances, for notational convenience.

\subsubsection*{Case (\ref{dilute}): $t_n \geq \lambda n$ for some $\lambda > \lambda_c$}

Let us start with case (\ref{dilute}), which is the simpler one. In this case, the hypothesis $t_n \geq \lambda n$ implies that for any site $z$,
\begin{equation} \label{bound_subcrit}
\bar{p}_{n,t}(\|z\|) \leq \bar{p}_{n,t}(0) \leq 1 - e^{- \frac{\lambda_c}{\lambda} \log 2} = \rho < 1/2.
\end{equation}
For $z$ such that $\|z\| \leq t^{9/16}$, since
$$\pi_t(z) = \bar{\pi}_t(\|z\|) [1+ O(1/t^{3/4})] = \bar{\pi}_t(\|z\|) + O(1/n^{7/4})$$
uniformly, we also have
\begin{equation} \label{domination}
p_{n,t}(z) = 1 - e^{-n \pi_t(z)} = 1 - e^{-n \bar{\pi}_t(\|z\|) + O(1/n^{3/4})} \leq \rho' < 1/2
\end{equation}
for some $\rho' > \rho$. Moreover, this bound also holds for $\|z\| \geq t^{9/16}$ by Eq.(\ref{Hoeffding}).

This enables us to dominate our model by a subcritical percolation process. Take some $\delta>0$, and consider the model with $N \sim \mathcal{P}((1+\delta) n)$ particles: $N \geq n$ with probability tending to $1$, and in this case the resulting configuration dominates the original one. Moreover, if we choose $\delta$ such that
$$\lambda_c (1+\delta) n \leq \frac{\lambda+\lambda_c}{2} n,$$
the hypothesis $t_n \geq \tilde{\lambda} (1+\delta) n$ holds for some $\tilde{\lambda} \in (\lambda_c,\lambda)$, so that the bound (\ref{domination}) is satisfied by the associated parameters for some $\rho'' \in (\rho',1/2)$.

Let us assume furthermore that $t \leq n^3$, and subdivide $S_t$ into $\sim t^2$ horizontal and vertical parallelograms of size $\frac{c}{2} \log n \times c \log n$. Any cluster of diameter larger than $c \log n$ would have to cross one of these parallelograms ``in the easy direction'', which -- using the exponential decay property of percolation -- has a probability at most
\begin{equation}
C_0 t^2 e^{- C(\rho'') c \log n} \leq C_0 n^{6 - C(\rho'') c},
\end{equation}
that tends to $0$ as $n \to \infty$ for $c$ large enough.

In fact, the case $t \geq n^3$ can be handled more directly. We dominate our model by using $N \sim \mathcal{P}(2n)$: $N \geq n$ with high probability, and we have
$$\pi_t(z) = \bar{\pi}_t(\|z\|) [1+ O(1/t^{3/4})] = O(1/n^3),$$
so that $p_{2n,t}(z) = O(1/n^2)$. Hence, the probability to observe two neighboring sites is at most
$$\sum_{z \in \mathbb{V}^T} \sum_{z' \sim z} p_{2n,t}(z) p_{2n,t}(z') \leq 6 \max_{z' \in \mathbb{V}^T} p_{2n,t}(z') \times \Bigg( \sum_{z \in \mathbb{V}^T} p_{2n,t}(z) \Bigg) = O\bigg(\frac{1}{n}\bigg),$$
since $\sum_{z \in \mathbb{V}^T} p_{2n,t}(z) \leq 2n$.

\subsubsection*{Case (\ref{dense}): $e^{(\log n)^{\alpha}} \leq t_n \leq \lambda' n$ for some $\alpha \in (0,1)$ and $\lambda' < \lambda_c$}

Let us turn now to case (\ref{dense}). First of all, the hypothesis on $t_n$ ensures that
\begin{equation}
C_{\lambda'} \sqrt{t} \leq r_{n,t}^* \leq C_{\alpha} (\log t)^{1/2\alpha} \sqrt{t}
\end{equation}
Let us first study the behavior of $\bar{p}_{n,t}$ around $r_{n,t}^*$: we have
\begin{equation} \label{derivative}
\frac{\partial}{\partial r} \bar{p}_{n,t}(r) = (1 - \bar{p}_{n,t}(r)) \times \big( \log (1 - \bar{p}_{n,t}(r)) \big) \times \bigg( \frac{2 r}{t} \bigg).
\end{equation}
The hypothesis $t_n \leq \lambda' n$ also implies that
\begin{equation}
\bar{p}_{n,t}(0) \geq 1/2 + 2 \delta
\end{equation}
for some $\delta >0$ (like in Eq.(\ref{bound_subcrit})). We can thus define
\begin{equation}
r_{n,t}^- = (\bar{p}_{n,t})^{-1}(1/2+\delta) \quad \text{ and } \quad r_{n,t}^+ = (\bar{p}_{n,t})^{-1}(1/2-\delta)
\end{equation}
as we defined $r_{n,t}^*$, so that $\bar{p}_{n,t}(r)$ is bounded away from $0$ and $1$ for $r \in [r_{n,t}^-,r_{n,t}^+]$. Note that it also implies that $n \bar{\pi}_t(r)$ is bounded away from $0$ and $\infty$. A formula similar to Eq.(\ref{rstar}) apply in this case, in particular we have
\begin{equation}
C_{\lambda'}^{\pm} \sqrt{t} \leq r_{n,t}^{\pm} \leq C_{\alpha}^{\pm} (\log t)^{1/2\alpha} \sqrt{t}.
\end{equation}
Hence, we get from Eq.(\ref{derivative}) that for $r \in [r_{n,t}^-,r_{n,t}^+]$,
\begin{equation}
- \frac{C_1 (\log t)^{1/2\alpha}}{\sqrt{t}} \leq \frac{\partial}{\partial r} \bar{p}_{n,t}(r) \leq - \frac{C_2}{\sqrt{t}}.
\end{equation}
We deduce
\begin{equation} \label{ineq_p}
\frac{1}{2} - \frac{C_1 (\log t)^{1/2\alpha}}{\sqrt{t}} \big(r - r_{n,t}^*\big) \leq \bar{p}_{n,t}(r) \leq \frac{1}{2} - \frac{C_2}{\sqrt{t}} \big(r - r_{n,t}^*\big)
\end{equation}
for $r \in [r_{n,t}^*,r_{n,t}^+]$, and the same is true with inequalities reversed for $r \in [r_{n,t}^-,r_{n,t}^*]$. Now, for any $z$ in the annulus $S_{r_{n,t}^-,r_{n,t}^+}$,
\begin{equation}
p_{n,t}(z) - \bar{p}_{n,t}(z) = e^{-n \bar{\pi}_t(\|z\|)} \big(1 - e^{n(\bar{\pi}_t(\|z\|) - \pi_t(z))}\big),
\end{equation}
and using Eq.(\ref{LCLT}), we get $n (\bar{\pi}_t(\|z\|) - \pi_t(z)) = (n \bar{\pi}_t(\|z\|)) \times O(1/t^{3/4}) = O(1/t^{3/4})$, so that
\begin{equation} \label{bound_param}
p_{n,t}(z) - \bar{p}_{n,t}(z) = O(1/t^{3/4}).
\end{equation}
Hence, Eq.(\ref{ineq_p}) is also valid with $p_{n,t}(z)$, taking different constants if necessary:
\begin{equation}
p_{n,t}(z) = \frac{1}{2} - \frac{\epsilon(z)}{\sqrt{t}} \big(\|z\| - r_{n,t}^*\big),
\end{equation}
where $C'_1 \leq \epsilon(z) \leq C'_2 (\log t)^{1/2\alpha}$.

\begin{figure}
\begin{center}
\includegraphics[width=12cm]{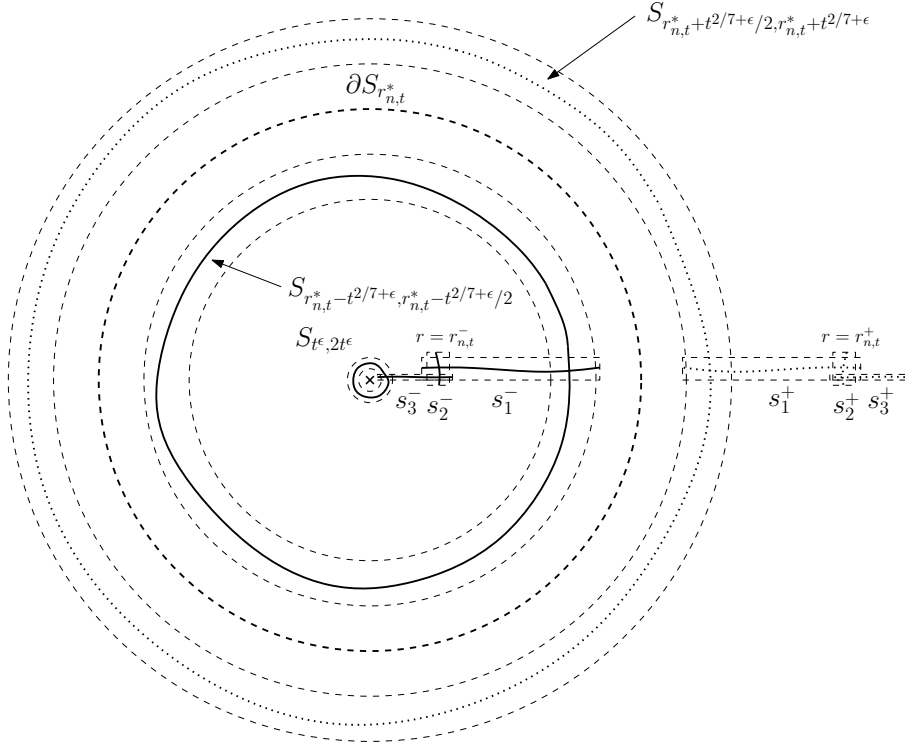}
\caption{\label{gradient} Construction showing localization of the macroscopic interface in $S_{r_{n,t}^* - t^{2/7+\epsilon},r_{n,t}^* + t^{2/7+\epsilon}}$.}
\end{center}
\end{figure}

We now apply a Poissonian approximation, as explained in Section \ref{ingredients}, in a well-chosen subset $A$ of sites -- typically an annulus of width $t^{2/7+\epsilon}$ around $\partial S_{r_{n,t}^*}$, but we need to enhance it a little bit. More precisely, we consider the configuration depicted on Figure \ref{gradient}: we take as a subset $A$ of sites the annulus $S_{r_{n,t}^* - 2 t^{2/7+\epsilon},r_{n,t}^* + 2 t^{2/7+\epsilon}}$, together with the smaller annulus $S_{t^{\epsilon},2t^{\epsilon}}$ and some strips connecting these annuli, $s_1^-=[r_{n,t}^-/2,r_{n,t}^* - 0.99 \times t^{2/7+\epsilon}] \times [0,t^{2/7+\epsilon}]$, $s_3^-=[0,r_{n,t}^-] \times [0,t^{\epsilon}]$ and some vertical strip $s_2^-$ in the middle. We also add external strips, $s_1^+=[r_{n,t}^* + 0.99 \times t^{2/7+\epsilon},2r_{n,t}^+] \times [0,t^{2/7+\epsilon}]$ and $s_3^+=[r_{n,t}^+,t] \times [0,t^{\epsilon}]$, connected by a vertical strip $s_2^+$.

We claim that for this choice of $A$, we have
\begin{equation}
\pi_t(A) = O(t^{-3/14 + 2 \epsilon}).
\end{equation}
Indeed, this is a direct consequence of $\pi_t(z) = O(1/t)$ and $r_{n,t}^* = O(\sqrt{t \log t})$, except for the most external strip $s_3^+=[r_{n,t}^+,t] \times [0,t^{\epsilon}]$: for this strip, we have (recall that $r_{n,t}^+ \geq C_{\lambda'}^+ \sqrt{t}$)
\begin{align*}
\pi_t(s_3^+) & = O \Bigg( t^{\epsilon} \sum_{r = C_{\lambda'}^+ \sqrt{t}}^{t^{9/16}} \frac{1}{t} e^{-r^2/t} \Bigg) + O \Big( t^{\epsilon} \times t \times e^{-t^{1/8}/2} \Big)\\
& = O ( t^{-1/2+\epsilon} ),
\end{align*}
using Eqs.(\ref{LCLT}) and (\ref{Hoeffding}).

In both annuli $S_{r_{n,t}^* - 2 t^{2/7+\epsilon},r_{n,t}^* - t^{2/7+\epsilon}}$ and $S_{r_{n,t}^* + t^{2/7+\epsilon},r_{n,t}^* + 2 t^{2/7+\epsilon}}$, the correlation length is at most
\begin{equation}
L\bigg(\frac{1}{2} \pm \frac{C_2}{\sqrt{t}} t^{2/7+\epsilon} \bigg) = L\bigg(\frac{1}{2} \pm C_2 t^{-3/14+\epsilon} \bigg) \approx t^{2/7 - 4\epsilon/3},
\end{equation}
the property of exponential decay with respect to $L$ (Eq.(\ref{expdecay_cross})) thus implies that there are circuits in these annuli, an occupied circuit in the internal one and a vacant circuit in the external one: indeed, these circuits can be constructed with roughly $r_{n,t}^*/t^{2/7} \approx t^{3/14}$ overlapping parallelograms of size $2 t^{2/7} \times t^{2/7}$ (this size is at the same time $\gg t^{2/7 - 4\epsilon/3}$, so that crossing probabilities tend to $1$ sub-exponentially fast, and $\ll t^{2/7+\epsilon}$). For the same reason, there are also occupied crossings in $s_1^-$ and $s_2^-$, and vacant crossings in $s_1^+$ and $s_2^+$. Since the parameter $p_{n,t}$ is at least $1/2 + \delta$ in $S_{r_{n,t}^-}$, there is also an occupied crossing in $s_3^-$, and an occupied circuit in $S_{t^{\epsilon},2t^{\epsilon}}$. Since $p_{n,t}$ is at most $1/2 - \delta$ outside of $S_{r_{n,t}^+}$, there is also a vacant crossing in $s_3^+$. As a consequence, the potential interfaces have to be located in the annulus $S_{r_{n,t}^* - 2 t^{2/7+\epsilon},r_{n,t}^* + 2 t^{2/7+\epsilon}}$.

Let us now consider of order $r_{n,t}^* / t^{2/7+\epsilon} \approx t^{3/14-\epsilon}$ sub-sections of length $t^{2/7+\epsilon}$ in the main annulus as on Figure \ref{uniq_figure}. If we use the same construction as for uniqueness in Proposition 7 of \cite{N1} (see also \cite{N3}), we can then show that with probability tending to $1$, there is an edge $e$ (on the dual lattice) in $S_{r_{n,t}^* - t^{2/7+\epsilon},r_{n,t}^* + t^{2/7+\epsilon}}$ with two arms, one occupied going to $\partial S_{r_{n,t}^* - 2 t^{2/7+\epsilon}}$ and one vacant to $\partial S_{r_{n,t}^* + 2 t^{2/7+\epsilon}}$. In this case, the interface is unique, and can be characterized as the set of edges possessing two such arms.

To see that the fluctuations of the interface are larger than $t^{2/7-\epsilon}$, we can use ``blocking vertical crossings'' as for Theorem 6 in \cite{N1}: the definition of $L$ indeed implies that in each rhombus of size $2 t^{2/7-\epsilon}$ centered on $\partial S_{r_{n,t}^*}$, there is a ``vertical'' crossing with probability bounded below by $1/4$.

\begin{figure}
\begin{center}
\includegraphics[width=10cm]{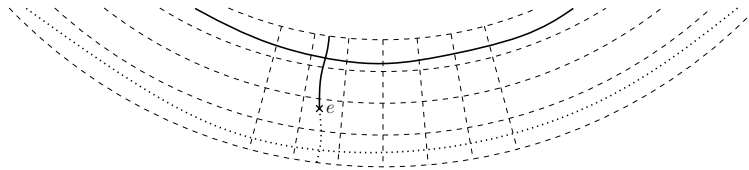}
\caption{\label{uniq_figure} By considering sub-sections of the main annulus, we can show the existence of an edge in $S_{r_{n,t}^* - t^{2/7+\epsilon},r_{n,t}^* + t^{2/7+\epsilon}}$ possessing two arms as depicted, which implies uniqueness of the interface.}
\end{center}
\end{figure}

We have just seen that the interface can be characterized as the set of two-arm edges. This allows to describe its fine local structure, by comparing it to critical percolation interfaces like in \cite{N1}. Hence, its length -- number of edges -- behaves in expectation as
\begin{equation}
\mathbb{E}[L_n] \approx t^{2/7} \sqrt{t} (t^{2/7})^{-1/4} = t^{5/7},
\end{equation}
$\alpha_2=1/4$ being the so-called ``two-arm exponent'' measuring the probability for critical percolation to observe two arms: the probability to observe two such arms going to distance $n$ decreases as $n^{-\alpha_2}$. Moreover, a decorrelation property between the different sub-sections also holds, as in \cite{N1}, so that $L_n$ is concentrated around its expectation: more precisely, $L_n / \EE[L_n] \to 1$ in $L^2$ as $n \to \infty$.

\begin{remark}
We could also construct scaling limits as in \cite{N3}, using the technology of Aizenman and Burchard \cite{AB} to prove tighness. One can for example look at the portion of the interface close to the $x$-axis and scale it by a quantity of order $t^{2/7}$ (the right quantity is called ``characteristic length for the gradient percolation model'' in \cite{N3}). The curves we get in this way are of Hausdorff dimension $7/4$, as for critical percolation.
\end{remark}

\begin{remark}
Here case (\ref{dense}) contains simultaneously the two cases $t \sim \lambda n$ (for $\lambda < \lambda_c$) and $t \sim n^{\alpha}$. However, note that these two regimes can be distinguished by the size of the ``transition window'', where the parameter decreases from $1/2+\delta$ to $1/2-\delta$, as shown on Figure \ref{densecluster}:
\begin{itemize}
\item When $t \sim \lambda n$, the transition takes place gradually with respect to $\sqrt{t}$. The parameter at the origin is bounded away from $1$, so that there are ``holes'' everywhere, even near the origin.
\item When $t \sim n^{\alpha}$, the transition is concentrated in a window of size $\sim \sqrt{t}/\sqrt{\log t}$ around $r^* \asymp \sqrt{t \log t}$. The parameter at the origin tends to $1$, and the macroscopic cluster is very dense around the origin.
\end{itemize}
Here we used that around $r_{n,t}^*$, the local behavior of $p_{n,t}$ is similar for all $t$ such that $e^{(\log n)^{\alpha}} \leq t \leq \lambda' n$, with at most an extra $(\log t)^{1/2\alpha}$ factor that does not affect the exponents.
\end{remark}

\begin{figure}
\begin{center}

\subfigure[$t \ll n$: localized transition.]{\includegraphics[width=.5\textwidth,clip]{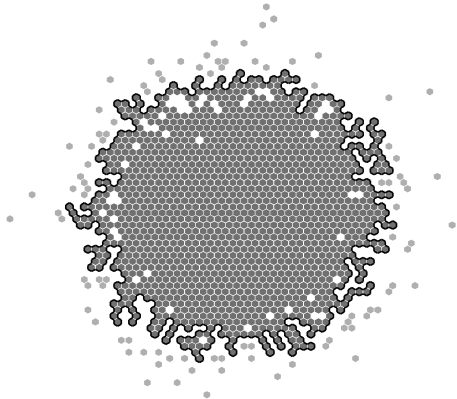}}\\

\subfigure[$t \sim n$: gradual transition.]{\includegraphics[width=.72\textwidth,clip]{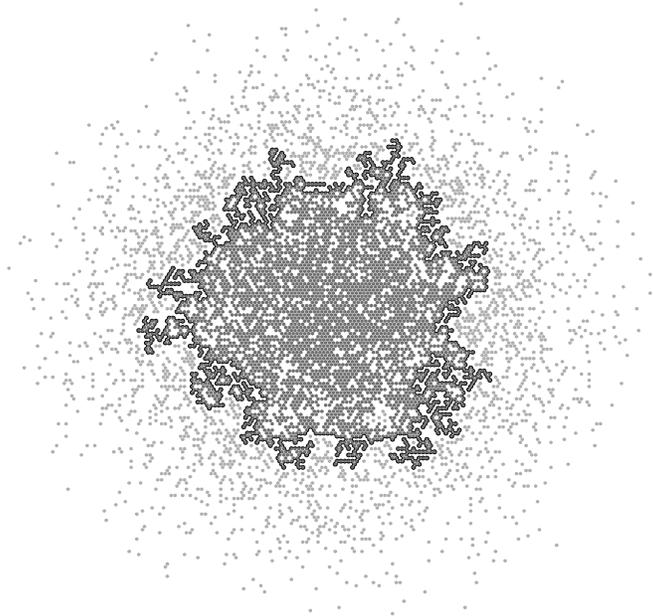}}\\

\caption{\label{densecluster} Cluster of the origin in dense phase.}
\end{center}
\end{figure}

\section{Model with a source}

We briefly discuss in this section a dynamical model with a source of particles at the origin (think for instance of an ink stain spreading out). We start with no particles, and at each time $t \geq 0$, new particles are added at the origin with rate $\mu$: the number $n_t$ of new particles at time $t$ has distribution $\mathcal{P}(\mu)$ for some fixed $\mu>0$. Once arrived, the particles perform independent simple random walks. As shown on Figure \ref{evolution2}, we observe in this setting a macroscopic cluster around the origin, whose size increases as the number of particles gets larger and larger.

\begin{figure}
\begin{center}

\subfigure[$t=10$.]{\includegraphics[width=.325\textwidth,clip]{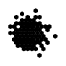}}
\subfigure[$t=100$.]{\includegraphics[width=.325\textwidth,clip]{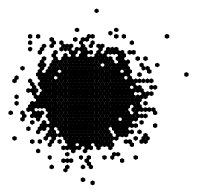}}
\subfigure[$t=1000$.]{\includegraphics[width=.325\textwidth,clip]{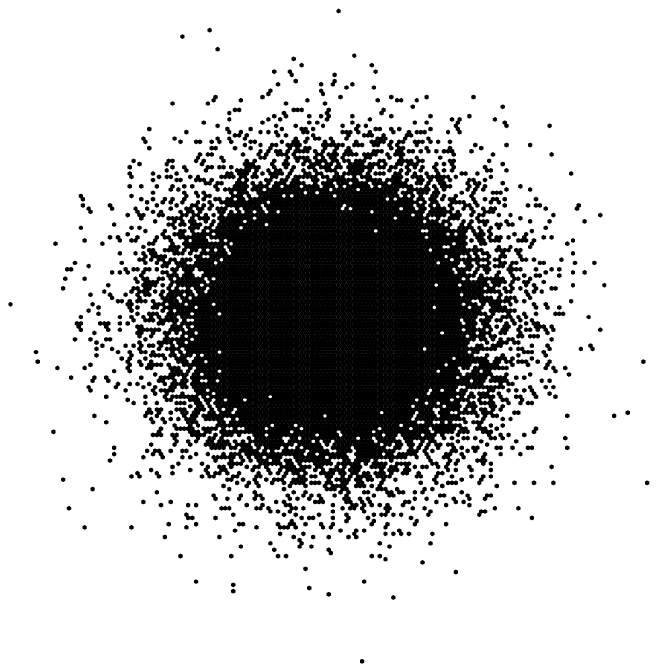}}\\

\caption{\label{evolution2} A ``stain'' growing as the time $t$ increases, with new particles arriving at rate $\mu = 50$.}
\end{center}
\end{figure}

The number of particles $N_t(z)$ on site $z$ at time $t$ is given by a sum of independent Poisson random variables:
\begin{equation}
N_t(z) \sim \mathcal{P}(\mu \pi_t(z)) + \ldots + \mathcal{P}(\mu \pi_0(z)) \sim \mathcal{P}(\mu \rho_t(z)),
\end{equation}
with
\begin{equation}
\rho_t(z) = \sum_{u=0}^t \pi_u(z).
\end{equation}
We thus introduce as for the previous model
\begin{equation}
q_{\mu,t}(z) = 1 - e^{-\mu \rho_t(z)}.
\end{equation}

For any value of $\mu$, this parameter tends to $1$ near the origin, and this dynamical model turns out to behave like the previous model in dense phase. We show that the constructions of the previous section can easily be adapted to this new setting.

\begin{lemma}
For each $\epsilon >0$, there exist constants $C_i=C_i(\epsilon)$ such that
\begin{equation}
\rho_t(z) \geq C_1 \log t - C_2
\end{equation}
uniformly on sites $z$ such that $\|z\| \leq t^{1/2-\epsilon}$.
\end{lemma}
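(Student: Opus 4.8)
The plan is to lower-bound $\rho_t(z) = \sum_{u=0}^t \pi_u(z)$ by retaining only those times $u$ at which the Local Central Limit Theorem estimate (Eq.(\ref{LCLT})) applies and gives a contribution of order $1/u$. The key observation is that the harmonic-like sum $\sum 1/u$ produces the desired $\log t$. Concretely, for a site $z$ with $\|z\| \leq t^{1/2-\epsilon}$, I would first identify the range of times $u$ for which two things hold simultaneously: (a) the point $z$ lies inside the window of validity of Eq.(\ref{LCLT}), i.e. $\|z\| \leq u^{9/16}$, and (b) the Gaussian factor $e^{-\|z\|^2/u}$ is bounded away from $0$, i.e. $\|z\|^2 \leq u$ (say $u \geq \|z\|^2$). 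Since $\|z\| \leq t^{1/2-\epsilon}$, condition (a) requires $u \geq \|z\|^{16/9}$, which is satisfied for $u \geq \|z\|^2 \geq \|z\|^{16/9}$ once $\|z\| \geq 1$; so the binding constraint is simply $\|z\|^2 \leq u \leq t$.

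The main step is then the estimate
\begin{equation}
\rho_t(z) \geq \sum_{u = \|z\|^2}^{t} \pi_u(z) \geq \sum_{u = \|z\|^2}^{t} \frac{\sqrt{3}}{2\pi u} e^{-\|z\|^2/u} \big[1 + O(u^{-3/4})\big].
\end{equation}
On this range $e^{-\|z\|^2/u} \geq e^{-1}$ and the bracketed error is bounded, so the summand is at least $c/u$ for a universal constant $c>0$, whence
\begin{equation}
\rho_t(z) \geq c \sum_{u=\|z\|^2}^{t} \frac{1}{u} \geq c \log\!\bigg( \frac{t}{\|z\|^2} \bigg) \geq c \log\!\big( t \cdot t^{-(1-2\epsilon)} \big) = 2 c \epsilon \log t,
\end{equation}
using $\|z\|^2 \leq t^{1-2\epsilon}$. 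Absorbing the boundary term of the harmonic-sum/integral comparison into the additive constant $C_2$, this yields exactly $\rho_t(z) \geq C_1 \log t - C_2$ with $C_1 = C_1(\epsilon)$, uniformly over the prescribed sites. I would phrase the harmonic comparison via $\sum_{u=a}^{b} 1/u \geq \log(b/a) - O(1/a)$ (or an integral bound), being careful that the lower index $\|z\|^2$ may be as small as $1$, in which case the bound is simply $c\log t$.

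The only point requiring genuine care—and what I expect to be the main obstacle—is handling the interplay between the two constraints defining the admissible summation range, and in particular checking uniformity over all $z$ with $\|z\| \leq t^{1/2-\epsilon}$, including small $\|z\|$. For $\|z\|$ of constant order the window constraint $u \geq \|z\|^{16/9}$ is vacuous and one sums essentially from $u=1$, so the $O(u^{-3/4})$ error term must be controlled near $u=1$; this is harmless since finitely many small terms only shift the additive constant $C_2$. The choice of starting index $\|z\|^2$ rather than $\|z\|^{16/9}$ is what guarantees the Gaussian factor stays bounded below by $e^{-1}$ over the entire retained range, which is essential for converting the sum into a clean harmonic series; one must verify that $\|z\|^2 \leq t^{9/16\cdot 2}$-type window conditions hold throughout, i.e. that every retained $u \in [\|z\|^2, t]$ satisfies $\|z\| \leq u^{9/16}$, which follows from $\|z\| \leq u^{1/2} \leq u^{9/16}$ on this range. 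With these verifications the result follows.
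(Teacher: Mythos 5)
Your proof is correct and follows essentially the same route as the paper's: both restrict the sum to times $u$ with $\|z\|^2/u \leq 1$ (so that $e^{-\|z\|^2/u} \geq e^{-1}$ and the LCLT window condition holds) and then extract $2\epsilon \log t$ from the resulting harmonic sum. The only differences are cosmetic: the paper uses the $z$-uniform cutoff $u \geq t^{1-2\epsilon}$ rather than your $u \geq \|z\|^2$, and handles the LCLT error additively (as a summable $O(u^{-7/4})$ term absorbed into $C_2$) rather than multiplicatively, which sidesteps the small-$u$ caveat you rightly flag and resolve.
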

Hence in particular, $q_{\mu,t}(z) \to 1$ uniformly on $S_{t^{1/2-\epsilon}}$.

\begin{proof}
We write
$$\rho_t(z) \geq \sum_{u=t^{1-2\epsilon}}^t \pi_u(z) \geq \sum_{u=t^{1-2\epsilon}}^t \Bigg( \frac{\sqrt{3}}{2 \pi u} e^{- \|z\|^2/u} + O \bigg( \frac{1}{u^{7/4}} \bigg) \Bigg).$$
Since $\|z\|^2/u \leq t^{1-2\epsilon}/u \leq 1$, and $\sum_{u=t^{1-2\epsilon}}^t 1/u^{7/4} \leq \sum_{u=1}^{\infty} 1/u^{7/4} < \infty$, we get
\begin{equation}
\rho_t(z) \geq \frac{\sqrt{3} e^{-1}}{2 \pi} \bigg( \sum_{u=t^{1-2\epsilon}}^t \frac{1}{u} \bigg) - C,
\end{equation}
whence the result, since
\begin{align*}
\sum_{u=t^{1-2\epsilon}}^t \frac{1}{u} & = \big(\log t + \gamma + o(1)\big) - \big((1-2\epsilon)\log t + \gamma + o(1)\big)\\
& = 2 \epsilon \log t + o(1).
\end{align*}
\end{proof}

Let us now consider for $r>0$
\begin{equation}
\bar{\rho}_t(r) = \frac{\sqrt{3}}{2 \pi} \int_{r^2/t}^{+\infty} \frac{e^{-u}}{u} du,
\end{equation}
and also
\begin{equation}
\quad \bar{q}_{\mu,t}(r) = 1 - e^{-\mu \bar{\rho}_t(r)}.
\end{equation}

\begin{lemma}
We have
\begin{equation}
\rho_t(z) = \bar{\rho}_t(\|z\|) + O\bigg( \frac{1}{t^{9/16}} \bigg)
\end{equation}
uniformly on sites $z$ such that $\|z\| \geq t^{7/16}$.
\end{lemma}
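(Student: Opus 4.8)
The plan is to realize $\bar\rho_t(r)$ as the continuous analogue of the discrete sum defining $\rho_t$, and then to control the discretization error and the Local CLT error separately. The starting observation is the change of variables $v = r^2/u$, which turns the definition of $\bar\rho_t$ into the integral of the Gaussian profile $\bar\pi$:
\[
\bar\rho_t(r) = \int_0^t \bar\pi_u(r)\,du, \qquad \bar\pi_u(r) = \frac{\sqrt3}{2\pi u}e^{-r^2/u}.
\]
So it suffices to compare $\rho_t(z) = \sum_{u=0}^t \pi_u(z)$ with $\int_0^t \bar\pi_u(\|z\|)\,du$, and the whole difficulty is that the estimate (\ref{LCLT}) is only available for $\|z\| \le u^{9/16}$, i.e.\ for $u \ge u_0 := \lceil \|z\|^{16/9}\rceil$.

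First I would dispose of the far regime $\|z\| > t^{9/16}$, where $u_0 > t$ so that (\ref{LCLT}) is never available on $[0,t]$. There both quantities are super-polynomially small: (\ref{Hoeffding}) gives $\rho_t(z) \le C\,t\, e^{-\|z\|^2/2t} \le C\,t\, e^{-t^{1/8}/2}$, while the tail bound $\int_a^\infty (e^{-v}/v)\,dv \le e^{-a}/a$ applied at $a = \|z\|^2/t > t^{1/8}$ gives the same for $\bar\rho_t(\|z\|)$, so their difference is $o(t^{-9/16})$.

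In the main regime $t^{7/16} \le \|z\| \le t^{9/16}$, write $r = \|z\|$ and split both the sum and the integral at $u_0$. On $[0,u_0]$ everything is again negligible: since $r^2/u_0 = r^{2/9} \ge t^{7/72}$, (\ref{Hoeffding}) gives $\sum_{u \le u_0}\pi_u(z) \le C\,u_0\, e^{-r^{2/9}/2}$, and the same change of variables gives $\int_0^{u_0}\bar\pi_u(r)\,du = \frac{\sqrt3}{2\pi}\int_{r^{2/9}}^\infty (e^{-v}/v)\,dv$, both super-polynomially small. On the LCLT range $[u_0,t]$ I would insert (\ref{LCLT}) to write $\pi_u(z) = \bar\pi_u(r) + \bar\pi_u(r)\,O(u^{-3/4})$; the relative-error contribution is bounded by $C\sum_{u \ge u_0} u^{-7/4} = O(u_0^{-3/4}) = O(t^{-7/12})$, which is the dominant error term and already $O(t^{-9/16})$ since $7/12 > 9/16$. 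It then remains to compare $\sum_{u_0}^t \bar\pi_u(r)$ with $\int_{u_0}^t \bar\pi_u(r)\,du$; since $u \mapsto \bar\pi_u(r)$ is unimodal with a single peak at $u=r^2$ of height $\asymp 1/r^2 \le t^{-7/8}$, a standard bounded-variation (Euler--Maclaurin) estimate controls this discretization error by the total variation plus the endpoint values, hence by $O(1/r^2) + O(1/t) = O(t^{-7/8})$. Summing the three contributions yields the claim.

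The step I expect to require the most care is the bookkeeping around the cutoff $u_0$: the hypothesis $\|z\| \ge t^{7/16}$ is exactly what forces $u_0 \ge t^{7/9}$, which is in turn what makes the summed relative error $u_0^{-3/4}$ beat the target $t^{-9/16}$. One must check that this threshold simultaneously leaves a genuine LCLT range $[u_0,t]$ when $r \le t^{9/16}$, and makes the truncated low-$u$ tails genuinely super-polynomially small, \emph{uniformly} in $r$. The remaining sum-versus-integral comparison is then routine once the unimodality and the $\asymp 1/r^2$ peak height of $u \mapsto \bar\pi_u(r)$ are noted.
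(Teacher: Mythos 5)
Your proof is correct, and it rests on the same pillars as the paper's: kill small times with the Hoeffding bound Eq.(\ref{Hoeffding}), insert the Local CLT Eq.(\ref{LCLT}) at large times, compare the resulting Riemann sum to an integral, and use the change of variables $v=\|z\|^2/u$ to identify that integral with $\bar\rho_t(\|z\|)$. The genuine differences are in the decomposition. The paper cuts the sum at the fixed time $t^{3/4}$, so its dominant error is $\sum_{u>t^{3/4}}O(u^{-7/4})=O(t^{-9/16})$ -- this is exactly where the exponent $9/16$ comes from -- and it does the sum-versus-integral step via a Lipschitz bound on $f_\lambda(x)=e^{-\lambda/x}/x$ (namely $|f'_\lambda|\le M/\lambda^2=O(t^{1/4})$), yielding an error $O(t^{-3/4})$. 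You cut instead at the $z$-dependent time $u_0=\lceil\|z\|^{16/9}\rceil$, the exact threshold of validity of Eq.(\ref{LCLT}), treat the regime $\|z\|>t^{9/16}$ (empty LCLT window) separately, and handle the sum-versus-integral step by unimodality and bounded variation. Your version buys two things: first, a slightly sharper error, $O(u_0^{-3/4})=O(t^{-7/12})$ with $7/12>9/16$; second, and more substantively, it never invokes Eq.(\ref{LCLT}) outside its stated range. Indeed, with the paper's fixed cutoff, the LCLT is applied for $u$ just above $t^{3/4}$ while $\|z\|\ge t^{7/16}=t^{28/64}>(t^{3/4})^{9/16}=t^{27/64}$, i.e.\ outside the stated uniformity region $\|z\|\le u^{9/16}$; this is harmless, because for such $u$ both $\pi_u(z)$ and the Gaussian term are super-polynomially small by Eq.(\ref{Hoeffding}), so the displayed identity still holds, but that justification is left implicit in the paper, whereas your choice of cutoff makes it unnecessary. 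What the paper's fixed cutoff buys in exchange is a more streamlined computation with a single, $z$-independent splitting point, which produces the $t^{-9/16}$ error exactly as stated in the lemma.
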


\begin{proof}
We write
\begin{align*}
\rho_t(z) & = \sum_{u=0}^{t^{3/4}} \pi_u(z) + \sum_{u=t^{3/4}+1}^t \pi_u(z)\\
& = \sum_{u=0}^{t^{3/4}} \pi_u(z) + \sum_{u=t^{3/4}+1}^t \Bigg( \frac{\sqrt{3}}{2 \pi u} e^{- \|z\|^2/u} + O \bigg( \frac{1}{u^{7/4}} \bigg) \Bigg).
\end{align*}
For each $0 \leq u \leq t^{3/4}$, we have by Eq.(\ref{Hoeffding})
\begin{equation}
\pi_u(z) \leq C e^{-\|z\|^2/2u} \leq C e^{-t^{1/8}/2}
\end{equation}
since $\|z\|^2 \geq t^{7/8}$, so that the corresponding sum is a $O(t^{3/4} e^{-t^{1/8}/2}) = o(1/t^{3/4})$. We also have
$$\sum_{u=t^{3/4}+1}^t O \bigg( \frac{1}{u^{7/4}} \bigg) = O \bigg( \frac{1}{t^{9/16}} \bigg).$$
Let us now estimate the main sum. We can write
\begin{equation}
\sum_{u=t^{3/4}+1}^t \frac{1}{u} e^{- \|z\|^2/u} = \sum_{u=t^{3/4}+1}^t \frac{1}{t} f_{\lambda}\bigg(\frac{u}{t}\bigg),
\end{equation}
with $f_{\lambda}(x) = e^{-\lambda/x} / x$ and $\lambda = \|z\|^2/t \geq t^{-1/8}$, and we have
\begin{equation}
\Bigg| \sum_{u=t^{3/4}+1}^t \frac{1}{t} f_{\lambda}\bigg(\frac{u}{t}\bigg) - \int_{t^{-1/4}}^1 f_{\lambda}(x) dx \Bigg| \leq \sum_{u=t^{3/4}+1}^t \int_{(u-1)/t}^{u/t} \Bigg| f_{\lambda}\bigg(\frac{u}{t}\bigg) - f_{\lambda}(x) \Bigg| dx.
\end{equation}
By an easy calculation,
$$f'_{\lambda}(x) = \frac{1}{x^2} \bigg( \frac{\lambda}{x} - 1\bigg) e^{-\lambda/x} = \frac{1}{\lambda^2} g\bigg(\frac{\lambda}{x}\bigg),$$
with $g(y) = y^2 (y-1) e^{-y}$, that is bounded on $[0,+\infty)$. Hence for some $M>0$,
$$f'_{\lambda}(x) \leq \frac{M}{\lambda^2} = O(t^{1/4}),$$
so that for $x \in [(u-1)/t,u/t]$,
\begin{equation}
\Bigg| f_{\lambda}\bigg(\frac{u}{t}\bigg) - f_{\lambda}(x) \Bigg| \leq C' t^{1/4} \bigg| \frac{u}{t} - x\bigg| \leq \frac{C'}{t^{3/4}},
\end{equation}
and finally
$$\sum_{u=t^{3/4}+1}^t \int_{(u-1)/t}^{u/t} \Bigg| f_{\lambda}\bigg(\frac{u}{t}\bigg) - f_{\lambda}(x) \Bigg| dx = O \bigg( \frac{1}{t^{3/4}} \bigg).$$
Putting everything together, we get
\begin{equation}
\rho_t(z) = \int_{t^{-1/4}}^1 \frac{1}{x} e^{-\|z\|^2/tx} dx + O \bigg( \frac{1}{t^{9/16}} \bigg).
\end{equation}
Making the change of variable $u = \|z\|^2/tx$, we obtain
\begin{equation}
\rho_t(z) = \int_{\|z\|^2/t}^{\|z\|^2/t^{3/4}} \frac{e^{-u}}{u} du + O \bigg( \frac{1}{t^{9/16}} \bigg),
\end{equation}
which allows us to conclude, since
\begin{equation}
\int_{\|z\|^2/t^{3/4}}^{+\infty} \frac{e^{-u}}{u} du = O \bigg( \frac{t^{3/4}}{\|z\|^2} e^{- \|z\|^2/t^{3/4}} \bigg) = O (t^{-1/8} e^{- t^{1/8}}).
\end{equation}
\end{proof}

Since $\bar{q}_{\mu,t}(r)$ is strictly decreasing, tends to $0$ as $r \to \infty$, and to $1$ as $r \to 0^+$, we can introduce
\begin{equation}
r_{\mu,t}^* = (\bar{q}_{\mu,t})^{-1}(1/2),
\end{equation}
and an easy calculation shows in this case that
\begin{equation}
r_{\mu,t}^* =  \sqrt{F^{-1}\bigg( \frac{2 \pi \log 2}{\mu \sqrt{3}} \bigg)} \sqrt{t},
\end{equation}
with $F(x) = \int_x^{+\infty} \frac{e^{-u}}{u} du$, that is a decreasing bijection from $(0,+\infty)$ onto itself. We also consider
\begin{equation}
r_{\mu,t}^- = (\bar{q}_{\mu,t})^{-1}(3/4) \quad \text{ and } \quad r_{\mu,t}^+ = (\bar{q}_{\mu,t})^{-1}(1/4),
\end{equation}
and we have
\begin{equation}
r_{\mu,t}^{\pm} = C^{\pm} \sqrt{t}.
\end{equation}
If we compute the derivative of $\bar{q}_{\mu,t}(r)$ with respect to $r$,
\begin{align*}
\frac{\partial}{\partial r} \bar{q}_{\mu,t}(r) = (1 - \bar{q}_{\mu,t}(r)) \times \frac{\sqrt{3} \mu}{2 \pi} \times \frac{e^{-r^2/t}}{r^2/t} \times \frac{2r}{t},
\end{align*}
we get, similarly to Eq.(\ref{ineq_p}), that
\begin{equation}
\bar{q}_{\mu,t}(r) = \frac{1}{2} - \frac{\epsilon(r)}{\sqrt{t}} \big(r - r_{n,t}^*\big),
\end{equation}
for $r \in [r_{\mu,t}^-,r_{\mu,t}^+]$, where $C_1 \leq \epsilon(r) \leq C_2$. We are thus in a position to apply the same construction as in the previous section (Figure \ref{gradient}), and the properties of Theorem \ref{main_theorem} (case (\ref{dense})) still hold: for any small $\epsilon>0$, with probability tending to $1$ as $t \to \infty$,
\begin{enumerate}[(a)]
\item there is a unique interface $\gamma_t$ surrounding $\partial S_{t^{\epsilon}}$,

\item this interface remains localized in the annulus $S_{r_{\mu,t}^* - t^{2/7+\epsilon},r_{\mu,t}^* + t^{2/7+\epsilon}}$ around $\partial S_{r_{\mu,t}^*}$, and its fluctuations compared to the circle $r=r_{\mu,t}^*$ are larger than $t^{2/7-\epsilon}$, both inward and outward,

\item its discrete length $L_t$ satisfies $t^{5/7-\epsilon} \leq L_t \leq t^{5/7+\epsilon}$.
\end{enumerate}

\begin{remark}
We could also modify our model by adding a fixed number $n_0$ of particles at each time $t$: the same results apply in this case, by using a Poissonian approximation in the same set $A$ of sites as in the previous section. Indeed, we can ensure that the configuration in $A$ coincides with the ``Poissonian configuration'' with probability at least $1 - \rho_t(A)$, for
\begin{equation}
\rho_t(A) = \sum_{z \in A} \rho_t(z) = \sum_{u=0}^t \pi_u(A).
\end{equation}
\end{remark}

\section*{Acknowledgements}

The author would like to thank C. Newman, B. Sapoval, and W. Werner for many stimulating discussions. This research was supported in part by the NSF under grant OISE-07-30136.

\end{document}